\algnewcommand\algorithmicinput{\textbf{Input:}}
\algnewcommand\Input{\item[\algorithmicinput]}
\algnewcommand\algorithmicoutput{\textbf{Output:}}
\algnewcommand\Output{\item[\algorithmicoutput]}
\algnewcommand{\LineIf}[2]{\State \algorithmicif\, #1 \,\algorithmicthen\, #2 \,\algorithmicend\ \algorithmicif}
\algnewcommand{\LineForAll}[2]{\State \algorithmicforall\, #1 \,\algorithmicdo\, #2 \,\algorithmicend\ \algorithmicfor}
\algnewcommand{\Accept}{\textbf{accept}}
\algnewcommand{\Reject}{\textbf{reject}}
\newcommand{\Model}{\mathrm{Model}}
\newcommand{\Modelbf}{\mathbf{Model}}
\newcommand{\dom}{\mathrm{dom}}
\newcommand{\image}{\mathrm{image}}
\newcommand{\eq}{\ensuremath\mathrel{=}}
\newcommand{\NL}{\ensuremath\mathsf{NL}}
\newcommand{\NP}{\ensuremath\mathsf{NP}}
\newcommand{\co}{\ensuremath\mathsf{co}}
\newcommand{\PSPACE}{\ensuremath\mathsf{PSPACE}}
\newcommand{\EXPTIME}{\ensuremath\mathsf{EXPTIME}}
\newcommand{\AC}{\ensuremath\mathsf{AC}}
\newcommand{\LOGSPACE}{\ensuremath\mathsf{L}}
\newcommand{\Poly}{\ensuremath\mathsf{P}}
\newcommand{\gR}{\mathcal{R}}
\newtheorem{theorem}{Theorem}[section]
\newtheorem{lemma}[theorem]{Lemma}
\newtheorem{corollary}[theorem]{Corollary}
\newtheorem{proposition}[theorem]{Proposition}
\theoremstyle{remark}
\numberwithin{equation}{section}
\begin{document}

\title{On the Complexity of Properties of Partial Bijection Semigroups}
\date{\today}
\author{Trevor Jack}
\address{Illinois Wesleyan University \\
Department of Mathematics \\
1312 Park St.\\
Bloomington, IL 61701}
\email{trevjack@gmail.edu}a

\thanks{This work was partially supported by the Funda\c{c}\~{a}o para a Ci\^{e}ncia e a Tecnologia (Portuguese Foundation for Science and Technology) through the projects UIDB/00297/2020 (Centro de Matemática e Aplica\c{c}\~{o}es) and PTDC/MAT-PUR/31174/2017.}
\keywords{partial bijection semigroups, inverse semigroups, idempotent membership, algorithms, computational complexity, $\PSPACE$-completeness, $\NL$-algorithms, semigroup identities}
\subjclass[2020]{Primary: 20M20; Secondary 68Q25, 20M18}

\begin{abstract}
We examine the computational complexity of problems in which we are given generators for a partial bijection semigroup and asked to check properties of the generated semigroup. We prove that the following problems are in $\AC^0$: (1) enumerating left and right identities and (2) checking if the semigroup is completely regular. We also describe a nondeterministic logspace algorithm for checking if an inverse semigroup given by generators satisfies a fixed semigroup identity that may involve a unary inverse operation. We conclude with an alternative proof that checking membership of a given idempotent in a partial bijection semigroup is a $\PSPACE$-complete problem. The proof reduces from the well-known $\PSPACE$-complete Rectangle Tiling Problem, thereby illustrating a connection between Wang tilings and partial bijection semigroups.
\end{abstract}
\maketitle
\section{Introduction}

Given generators for a finite group of permutations, Sims' stabilizer chains can determine many properties of the generated group in polynomial time, $\Poly$, such as checking membership and calculating size. In contrast, a well-known result by Kozen \cite{KO:LBN} is that checking membership in a transformation semigroup given by generators is among the hardest problems solvable in polynomial space: that is, it is a $\PSPACE$-complete problem. Furthermore, the known algorithms for computing various properties of transformation semigroups, such as size, often rely upon an enumeration of the $\gR$-classes of the semigroup, which already requires exponential time \cite{EA:CFS,MI:GAP}.

There are canonical embeddings of groups into partial bijection semigroups and from partial bijection semigroups into transformation semigroups. So, in some sense, the complexity of partial bijection semigroups can be thought of as lying between the complexity of groups and the complexity of transformation semigroups. The authors in \cite{FJ:CP} analyzed the complexity of various problems in which we are given generators for a transformation semigroup and asked to check properties of the generated semigroup. This paper analyzes the complexity of corresponding problems in which the generators are partial bijections. In particular, the following problems are known to be in $\NL$ for transformation semigroups and this paper proves they are in $\AC^0$ for partial bijection semigroups:

\begin{itemize}
\item checking if the semigroup is a band;
\item checking if the semigroup is completely regular; and
\item checking if the semigroup is Clifford.
\end{itemize}

We improve upon an algorithm from \cite{FJ:CP} that enumerates left and right identities in polynomial time ($\Poly$). We now prove that this problem is in $\AC^0$. The authors in \cite{FJ:CP} also describe a nondeterministic algorithm that runs in logarithmic space ($\NL$) for checking if a transformation semigroup given by generators satisfies a fixed semigroup identity. This problem is the dual of the well-known identity checking problem for which the semigroup is fixed and the identity is given. See \cite{AVG:CIC} for background and complexity results on identity checking: in particular, examples of semigroups for which the identity checking problem is $\co\NP$-complete. This paper extends the algorithm from \cite{FJ:CP} to inverse semigroups. We allow the fixed semigroup identity to involve a unary inverse operation and describe an $\NL$ algorithm for determining if an inverse semigroup given by generators satisfies the identity.

We finally  consider the problem of checking membership in inverse semigroups, which can be thought of as partial bijection semigroups that contain unique inverses for each of their elements \cite[Thm 5.1.7]{HO:FST}. \cite[Thm 4.10]{TJ:CI} proves that the problem is $\PSPACE$-complete. This paper gives an alternative proof by reducing from the $\PSPACE$-complete Rectangle Tiling Problem and the argument is adaptable to any decision problem involving Wang tiling.

\section{Preliminaries} \label{NotationSection}

For $n \in \mathbb{N}$, let $[n] := \{1,\dots,n\}$. The {\bf full transformation semigroup} over $[n]$, denoted $T_n$, is the set of all mappings $f \colon [n] \to [n]$, together with function composition. For elements $a_1,\dots,a_k \in T_n$, let $\langle a_1,\dots,a_k\rangle$ be the subsemigroup generated by $a_1,\dots,a_k$. Subsemigroups of the full transformation semigroup are often also referred to as {\bf transformation semigroups}.

The {\bf full partial bijection semigroup} over $[n]$, denoted $I_n$, is the set of all partial bijective mappings $f \colon [n] \to [n]$, together with function composition. For $a,b \in I_n$, we define
\[ \dom(a) := \{q \in [n]: \exists p \in [n](qa = p)\} \]
\[\image(a) := \{q \in [n] : \exists p \in [n] (pa = q)\}\]

Subsemigroups of the full partial bijection semigroup are often also referred to as {\bf partial bijection semigroups}. This paper will be investigating the computational complexity of decision problems in which we are given a set of partial bijections and asked to check some property of the generated partial bijection semigroup. Our analysis will reference complexity classes from the following hierarchy:

$$ \AC^0 \subseteq \LOGSPACE \subseteq \NL \subseteq \Poly \subseteq \PSPACE \subseteq \EXPTIME.$$

$\AC^0$ is the class of sets decidable by uniform unbounded fan-in Boolean circuits of polynomial size and constant depth. Equivalently, $\AC^0$ is the class of first-order definable properties \cite{IM:DC}. Hence, it includes all decision problems in which we are given generators $a_1,\dots,a_k \in I_n$ and asked to check a property that can be characterized by a first-order formula quantified over the points $[n]$ and the generators $\{a_1,\dots,a_k\}$. Note that we are not allowed to quantify over all generated elements of the semigroup. $\LOGSPACE$ ($\NL$) is the class of sets decidable by a deterministic (nondeterministic) Turing machine using at most logarithmic space. $\Poly$ ($\PSPACE$) consist of sets that are decidable by a deterministic Turing machine in polynomial time (space). We refer the reader to \cite{PA:CC} for further background on computational complexity and to \cite{CP:AT} and \cite{HO:FST} for further background on semigroup theory.

\section{Extending Results For $T_n$ to $I_n$}
There is a natural representation of a partial bijection $a \in I_n$ as a transformation $a' \in T_{n+1}$, where $xa' = xa$ for $x \in \dom(a)$ and $xa' = n+1$ for $x \not \in \dom(a)$. Thus, partial bijection semigroup problems are at most as difficult as their corresponding transformation semigroup problems. Corollary~\ref{cor:red} follows from applying this fact to results from \cite{FJ:CP}, where relevant definitions for these results are discussed. In particular, we recall the following definitions.

A semigroup element $0 \in S$ is a \emph{left (right) zero} if $0a = 0$ ($a0 = 0$) for each $a \in S$. If a semigroup has a left zero and a right zero, then they are equal and we call this the zero element of the semigroup. A semigroup with a zero element is \emph{$n$-nilpotent} if the composition of any $n$ elements of the semigroup yields the zero element. A semigroup is \emph{nilpotent} if it is $n$-nilpotent for some $n \in \mathbb{N}$. Two elements $a,b \in S$ are \emph{$\gR$-related} if $aS \cup \{a\} = bS \cup \{b\}$. A semigroup is \emph{$\gR$-trivial} if no two distinct elements in the semigroup are $\gR$-related. A semigroup $S$ is \emph{regular} if for every element $a \in S$ there exists $b \in S$ such that $aba = a$.

\begin{corollary} \label{cor:red}
Checking if a partial bijection semigroup given by generators:
\begin{enumerate}
\item is commutative is in $\AC^0$ by \cite[Thm 3.2]{FJ:CP},
\item is a semilattice is in $\AC^0$ by  \cite[Thm 3.3]{FJ:CP},
\item is a group is in $\AC^0$ by  \cite[Thm 3.5]{FJ:CP},
\item has left zeroes, right zeroes, or a zero is in $\NL$ by \cite[Thm 4.6]{FJ:CP}.
\end{enumerate}
\end{corollary}

Several problems discussed in \cite{FJ:CP} have tighter upper complexity bounds for partial bijection semigroups than for transformation semigroups. For example, we have $\NL$ algorithms for checking if a transformation semigroup has commuting idempotents and whether the product of any two idempotents is idempotent. But these properties are always true for partial bijection semigroups. Also, we have an $\NL$ algorithm for checking if a transformation semigroup is a band, but we can do better for partial bijection semigroups. Idempotents in partial bijection semigroups are maps that fix their domain, so idempotents of a partial bijection semigroup commute. Thus, a partial bijection semigroup is a band iff it is a semilattice, which can be checked in $\AC^0$ by \cite[Thm 3.3]{FJ:CP}.

We now consider the problem of determining if a partial bijection semigroup is completely regular. There are several equivalent characterizations of completely regular semigroups \cite[Prop 4.1.1]{HO:FST}. We say a semigroup is {\bf completely regular} if each of its elements generates a subgroup of the semigroup. Determining if a transformation semigroup given by generators is completely regular is in $\NL$ \cite[Thm 5.6]{FJ:CP}, but we can give a stronger result for partial bijection semigroups.

\medskip
{\bf Completely Regular}
\begin{itemize}
\item Input: $a_1,\dots,a_k \in I_n$.
\item Problem: Is $\langle a_1,\dots,a_k\rangle$ completely regular?
\end{itemize}

\begin{theorem} \label{thm:ComReg}
Completely Regular is in $\AC^0$.
\end{theorem}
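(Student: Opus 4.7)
The plan is to give a first-order characterization of complete regularity for $S = \langle a_1, \ldots, a_k \rangle \leq I_n$ over the generators $[k]$ and the points $[n]$, which immediately places the problem in $\AC^0$. Writing $D_i := \dom(a_i)$, I claim that $S$ is completely regular if and only if (a) $\dom(a_i) = \image(a_i)$ for every $i \in [k]$, and (b) $x a_i \in \dom(a_j)$ whenever $x \in \dom(a_i) \cap \dom(a_j)$. Both clauses are expressible by first-order formulas, since $x \in \dom(a_i)$ and $x \in \image(a_i)$ unfold as existential quantifications over $[n]$.

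For the forward direction I would use that partial bijection semigroups are inverse, so complete regularity is equivalent to every element $s \in S$ lying in a subgroup, which for partial bijections is just $\dom(s) = \image(s)$. Specializing to $s = a_i$ gives (a). For (b), condition (a) forces $a_j^\omega$ to be the partial identity $e_{D_j}$ on $D_j$, so the element $s = a_i a_j^\omega \in S$ has $\image(s) = D_i \cap D_j$ and $\dom(s) = \{x \in D_i : x a_i \in D_j\}$; equating the two yields (b).

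For the reverse direction, the key is the elementary observation that if a bijection $a : T \to T$ preserves each of two subsets $X, Y \subseteq T$, then it also preserves $X \cap Y$. Iterating this with (a) and (b), each generator $a_i$ permutes every intersection $D_{j_1} \cap \cdots \cap D_{j_p}$ that contains $D_i$ as one of its terms. I would then prove by induction on $m$ that every word $s = a_{i_1} \cdots a_{i_m}$ satisfies $\dom(s) = \image(s) = D_{i_1} \cap \cdots \cap D_{i_m}$ and acts as a bijection of this intersection to itself. In the inductive step, writing $E = D_{i_1} \cap \cdots \cap D_{i_{m-1}}$, both the prefix $s' = a_{i_1} \cdots a_{i_{m-1}}$ and the last letter $a_{i_m}$ restrict to permutations of $E \cap D_{i_m}$, forcing $\dom(s' a_{i_m}) = E \cap D_{i_m}$. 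Hence every element of $S$ has $\dom = \image$, so $S$ is completely regular.

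The main technical obstacle will be threading the inductive step cleanly: one must verify that the domain of a composed word collapses to precisely the intersection of the generator-domains appearing in the word, which requires applying the intersection-permutation fact to every letter in the word rather than merely the last, and this in turn depends on the somewhat subtle iteration from (b) to permutation of arbitrary nested intersections.
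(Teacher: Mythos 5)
Your proof is correct and follows essentially the same route as the paper: the paper's single first-order condition $\dom(a_ia_j)=\dom(a_i)\cap\dom(a_j)$ is equivalent to your clauses (a) and (b) (the $i=j$ case gives (a), while (a) and (b) together recover the missing inclusion $\dom(a_ia_j)\subseteq\dom(a_i)\cap\dom(a_j)$ because $a_i$ then permutes $D_i\setminus D_j$), and both arguments conclude with the same induction showing that every word has domain equal to image equal to the intersection of its generators' domains. The only cosmetic differences are your use of $a_ia_j^\omega$ in the forward direction where the paper works with $a_ia_j$ directly, and your unused (and not quite accurate) aside that partial bijection semigroups are inverse.
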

\begin{proof}
Let $S = \langle a_1,\dots,a_k \rangle \leq I_n$. We claim that $S$ is completely regular iff the following first-order formula holds:
\[\forall i,j \in [k] : \dom(a_ia_j) = \dom(a_i) \cap \dom(a_j).\]

Assume $S$ is completely regular and pick any $i,j \in [k]$. We first prove that $\dom(a_ia_j) \subseteq \dom(a_i) \cap \dom(a_j)$. Pick any $x \in \dom(a_ia_j)$. Certainly $x \in \dom(a_i)$. A transformation generates a subgroup iff it permutes its own image. For partial bijections $a_i$ and $a_j$, this means $\dom(a_ia_j) = \image(a_ia_j)$. Thus, $x \in \image(a_j)$ which in turn forces $x \in \dom(a_j)$. To prove $\dom(a_i) \cap \dom(a_j) \subseteq \dom(a_ia_j)$, note that $\dom(a_i) = (\dom(a_i) \setminus \dom(a_j)) \cup (\dom(a_i) \cap \dom(a_j))$. Pick any $x \in \dom(a_i) \setminus \dom(a_j)$. Then $x \not \in \image(a_j)$, $x \not \in \image(a_ia_j) = \dom(a_ia_j)$, and thus $xa_i \not \in \dom(a_j)$. This proves that, in addition to being a bijection on its domain, $a_i$ is also a bijection on $\dom(a_i) \setminus \dom(a_j)$. Consequently, $a_i$ is a bijection on $\dom(a_i) \cap \dom(a_j)$ so that for any $x \in \dom(a_i) \cap \dom(a_j)$, we know $xa_i \in \dom(a_i) \cap \dom(a_j)$. Since $xa_i \in \dom(a_j)$, then $x \in \dom(a_ia_j)$.

Conversely, assume the first-order formula holds. We claim that $\dom(st) = \dom(s) \cap \dom(t)$ for every $s,t \in S$. Because this would yield $\dom(ss) = \dom(s) \cap \dom(s) = \dom(s)$, our claim would prove that every element permutes its own image. Pick any $s = s_1 \cdots s_\ell$ and $t = t_1 \cdots t_m$ with $s_1,\dots,s_\ell,t_1,\dots t_m \in \{a_1,\dots,a_k\}$. Pick any $x \in \dom(st)$. Certainly, $x \in \dom(s)$. For the sake of contradiction, suppose $x \not \in \dom(t)$. Then there is either an $s_i$ or $t_i$ satisfying one of the following two consequences: (1) $xs_1 \cdots s_{i-1} \not \in \dom(t_1)$ and $xs_1 \cdots s_i \in \dom(t_1)$ or (2) $xs t_1 \cdots t_{i-1} \not \in \dom(t_{i+1})$ and $xs t_1 \cdots t_i \in \dom(t_{i+1})$. Then either $\dom(s_it_1) \neq \dom(s_i) \cap \dom(t_1)$ or $\dom(t_it_{i+1}) \neq \dom(t_i) \cap \dom(t_{i+1})$. Both contradict the first-order formula, so $\dom(st) \subseteq \dom(s) \cap \dom(t)$.

Pick any $x \in \dom(s) \cap \dom(t)$. For the sake of contradiction, suppose that $xs \not \in \dom(t)$. Then there exists either $s_i$ or $t_i$ satisfying one of the following consequences: (1) $xs_1 \cdots s_{i-1} \in \dom(t_1)$ and $xs_1 \cdots s_i \not \in \dom(t_1)$ or (2) $xst_1 \cdots t_{i-1} \in \dom(t_{i+1})$ and $xst_1 \cdots t_i \not \in \dom(t_{i+1})$. Again, both consequences contradict the first-order formula, so $\dom(st) = \dom(s) \cap \dom(t)$.
\end{proof}

Several corollaries to \cite[Thm 5.6]{FJ:CP} can be analogously refined for partial bijection semigroups. Given generators for a transformation semigroup, there are $\NL$ algorithms to test for each of the following properties of the generated semigroup: (a) whether the semigroup is Clifford and (b) whether the semigroup, if commutative, is also regular. Recall that  idempotents of a partial bijection semigroup commute, so every completely regular partial bijection semigroup is also a Clifford semigroup \cite[Def 4.2.1]{HO:FST}. Furthermore, a commutative semigroup is regular iff it is completely regular. Thus, Theorem~\ref{thm:ComReg} yields the following corollaries.

\begin{corollary} \label{cor:Cliff}
Deciding whether a partial bijection semigroup given by generators is a Clifford semigroup is in $\AC^0$.
\end{corollary}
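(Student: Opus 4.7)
The plan is to reduce directly to the preceding theorem by observing that, for partial bijection semigroups, being a Clifford semigroup coincides with being completely regular. Once that equivalence is established, the result is immediate, since the preceding theorem places Completely Regular in $\AC^0$.

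First I would invoke the standard characterization of a Clifford semigroup as a completely regular semigroup whose idempotents commute (equivalently, a semilattice of groups; see, e.g., \cite[Thm 4.2.1]{HO:FST}). Next I would verify the well-known fact that the idempotents of any partial bijection semigroup commute: every idempotent of $I_n$ is a partial identity $\id_D$ on some $D \subseteq [n]$, and for two such idempotents $\id_{D_e}$ and $\id_{D_f}$ both $\id_{D_e}\id_{D_f}$ and $\id_{D_f}\id_{D_e}$ equal $\id_{D_e \cap D_f}$. Every subsemigroup of $I_n$ inherits this commutativity, so a partial bijection semigroup is Clifford iff it is completely regular, and the corollary follows from the preceding theorem.

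The only subtlety is selecting the right equivalent characterization of Clifford semigroups---the ``completely regular with commuting idempotents'' form rather than the more common ``regular with central idempotents'' form---so that the partial-bijection setting applies directly. No new computational argument is needed beyond the preceding theorem.
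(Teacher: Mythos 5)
Your proposal is correct and matches the paper's own (very brief) argument: the paper likewise notes that idempotents of a partial bijection semigroup commute, so completely regular implies Clifford, and the corollary follows from the preceding theorem. Your explicit verification that idempotents of $I_n$ are partial identities composing to the identity on the intersection of their domains is a welcome bit of added detail, but the route is the same.
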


\begin{corollary} \label{cor:Reg}
Deciding whether a commutative partial bijection semigroup given by generators is a regular semigroup is in $\AC^0$.
\end{corollary}

We now consider the following problem.

\medskip
{\bf Enumerate Identities}
\begin{itemize}
\item Input: $a_1,\dots,a_k \in T_n$.
\item Output: The left and right identities of $\langle a_1, \dots, a_k\rangle$.
\end{itemize}

Recall that an element $\ell$ (resp. $r$) of a semigroup $S$ is a {\bf left} (resp. {\bf right}) {\bf identity} if $\ell s = s$ (resp. $sr = s$) for all $s \in S$. It has been previously shown that left and right identities in transformation semigroups are idempotent powers of generators \cite[Lem 6.1 and Lem 6.3]{FJ:CP} and that they can be enumerated in polynomial time \cite[Thm 6.2 and Thm 6.4]{FJ:CP}. We now prove that Enumerate Identities is in $\AC^0$.

\begin{lemma} \label{lem:leftid}
Let $S := \langle a_1, \dots, a_k \rangle \leq T_n$. Then the idempotent power of $a_i$ is a left identity of $S$ iff
\[ \forall j \in [k] \, \forall x,y \in [n]: (x a_i = y a_i \implies x a_j = y a_j) \wedge (x a_i^2 = y a_i^2 \implies x a_i = y a_i) \]
\end{lemma}
\begin{proof}
Assume $\ell \in S$ is a left identity. By \cite[Lemma 6.1]{FJ:CP}, $\ell$ is the idempotent power of some generator: $\ell = a_i^\omega$. If $x a_i = y a_i$, then $x a_i^\omega a_j = y a_i^\omega a_j$ and thus $x a_j = y a_j$. If $x a_i^2 = y a_i^2$, then $x a_i^{\omega+1} = y a_i^{\omega+1}$ and thus $x a_i = y a_i$.

Assume $a_i$ satisfies the first-order formula and let $\ell = a_i^\omega$ be its idempotent power. Pick any $j \in [k]$ and any $x \in [n]$. Starting with $x\ell^2 = x\ell$ and repeatedly applying the second clause of the first-order formula, we obtain $x\ell a_i = x a_i$. Then the first clause yields $x\ell a_j = x a_j$. Thus, $\ell$ is a left identity.
\end{proof}

\begin{lemma} \label{lem:rightid}
Let $S := \langle a_1, \dots, a_k \rangle \leq T_n$. Then the idempotent power of $a_i$ is a right identity of $S$ iff
\[ \forall j,\ell \in [k] \, \forall x,y \in [n]: x a_j a_i = y a_\ell a_i \implies x a_j = y a_\ell \]
\end{lemma}
\begin{proof}
Assume $r \in S$ is a right identity. By \cite[Lemma 6.3]{FJ:CP}, $r$ is the idempotent power of some generator: $r = a_i^\omega$. If $xa_j a_i = y a_\ell a_i$, then $xa_j a_i^\omega = y a_\ell a_i^\omega$. Since $a_i^\omega$ is a right identity, then $xa_j = y a_\ell$.

Assume $a_i$ satisfies the first-order formula and let $r = a_i^\omega$ be its idempotent power. Pick any $j \in [k]$ and any $x \in [n]$. Starting with $xa_jr^2 = xa_jr$, we can use the formula to remove copies of $a_i$ until we are left with $xa_jr = xa_j$. Thus, $r$ is a right identity.
\end{proof}

\begin{theorem}
Enumerate Identities is in $\AC^0$.
\end{theorem}
\begin{proof}
For each generator $a_i$, we can define an $\AC^0$ circuit to check the formulas in Lemma~\ref{lem:leftid} and Lemma~\ref{lem:rightid}. If this circuit confirms that $a_i^\omega$ is a left or right identity, we would then like to identify the images $xa_i^\omega$ for each $x \in [n]$. Fortunately, the conditions in each of the lemmas offer us a way around direct computation.

Consider any $x,y \in [n]$. Since $xa_i = xa_i^{\omega+1}$, then $xa_i = ya_i^2$ implies $xa_i^{\omega+1} = ya_i^2$. By the conditions in each Lemma, this implies that $xa_i^\omega = ya_i$. So, the value of $xa_i^\omega$ equals $ya_i$ where $y$ satisfies $xa_i = ya_i^2$. Then, for each $x \in [n]$, each generator $a_i$ that satisfies the formulas in either Lemm~\ref{lem:leftid} or Lemma~\ref{lem:rightid}, and each $y \in [n]$, we define an $\AC^0$ to check if $xa_i = ya_i^2$. If the circuit accepts, then $xa_i^\omega = ya_i$

\end{proof}
Note that a semigroup has a two-sided identity iff it has a left identity and a right identity, so enumerating the two-sided identity is also in $\AC^0$.

We now consider the $\NL$ algorithm described in \cite[Thm 5.1]{FJ:CP} for checking if a transformation semigroup given by generators satisfies a given semigroup identity, We now generalize that algorithm to inverse partial bijection semigroups and semigroup identities that may involve a unary inverse operation. Let $a_1,\dots,a_k$ be partial bijective maps, each defined on subsets of $[n]$, and let $S= \langle a_1,\dots,a_k,a_1^{-1},\dots,a_k^{-1}\rangle$. Let $X^*$ be the free inverse monoid over the variables $X = \{x_1,\dots,x_m,x_1^{-1},\dots,x_m^{-1}\}$. A map $h:X^* \to S$ is a {\bf homomorphism} if $h(xy) = h(x)h(y)$ and $h(x^{-1}) = h(x)^{-1}$ for each $x,y \in X^*$. Let $u$ and $v$ be two elements of $X^*$. We say that an inverse semigroup $S$ {\bf models} $u \eq v$ if $h(u) = h(v)$ holds for each homomorphism $h:X^* \to S$. For a fixed identity $u \eq v$, define the following problem:

\medskip
{$\Modelbf(u \eq v)$}
\begin{itemize}
\item Input: $a_1,\dots,a_k \in I_n$
\item Problem: Does $\langle a_1,\dots,a_k,a_1^{-1},\dots,a_k^{-1} \rangle$ model $u \eq v$?
\end{itemize} 

We will show that this class of problems belongs to $\NL$ by showing that a broader class of problems also belongs to $\NL$. We say that an inverse semigroup $S$ {\bf models} $x_1 = x_1^2,\dots,x_e = x_e^2 \implies u = v$, where $e \leq m$, if for all homomorphisms $h \colon X^* \to S$ with $h(x_1), \dots, h(x_e)$ idempotent, we have $h(u) = h(v)$.

\medskip
{$\Modelbf(x_1 \eq x_1^2,\dots,x_e \eq x_e^2 \implies u \eq v)$}
\begin{itemize}
\item Input: $a_1,\dots,a_k \in I_n$
\item Problem: Does $\langle a_1,\dots,a_k \rangle$ model $x_1 \eq x_1^2,\dots,x_e \eq x_e^2 \implies u \eq v$?
\end{itemize}

\begin{theorem}
  Let $X = \{x_1, \dots, x_m,x_1^{-1},\dots,x_m^{-1}\}$ be a nonempty finite set of variables and let $u, v \in X^*$. Then, $\Model(x_1 \eq x_1^2,\dots,x_e \eq x_e^2 \implies u \eq v)$ belongs to $\NL$.
  \label{thm:model}
\end{theorem}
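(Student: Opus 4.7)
The approach is to verify the complement of the problem---the claim that the conditional identity \emph{fails}---in $\NL$, and then invoke $\NL = \co\NL$. Write $S = \langle a_1, \dots, a_k, a_1^{-1}, \dots, a_k^{-1}\rangle$. The identity fails iff there exist $s_1, \dots, s_m \in S$ with $s_1, \dots, s_e$ idempotent (equivalently, partial identities, since every idempotent partial bijection fixes its domain) and a witness point $p \in [n]$ such that the partial bijections obtained by substituting $x_i \mapsto s_i$ in $u$ and $v$ disagree at $p$, in one of three ways: both defined and unequal, or exactly one defined.

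Because $u$ and $v$ are fixed, the total number of variable occurrences in $u \cdot v$ is a constant $r$. The algorithm nondeterministically guesses $p$ and, letter-by-letter, the two intermediate-point traces $p = p_0^u, p_1^u, \dots, p_{|u|}^u$ and $p = p_0^v, \dots, p_{|v|}^v$, marking each position either with a guessed image or with an ``undefined'' flag that terminates that side. Only $O(\log n)$ bits of state are needed. From the traces, the algorithm extracts, for each variable $x_i$, a constant-sized set $P_i$ of pairs $(a,b)$ that $s_i$ is required to contain (occurrences of $x_i^{-1}$ contribute flipped pairs), together with any forbidden-domain points coming from ``undefined'' flags. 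It then checks that the two final images exhibit the guessed disagreement.

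The central reduction is that realizability of each $P_i$ by some $s_i \in S$---with the requested positive pairs and forbidden-domain points---is equivalent to reachability in a small product graph. List the positive pairs as $(a_1,b_1), \dots, (a_c,b_c)$ and the forbidden points as $f_1, \dots, f_d$, with $c+d \leq r$ constant. Work on the state space $[n]^c \times ([n] \cup \{*\})^d$: a generator acts coordinatewise, is blocked when undefined on any of the first $c$ coordinates, and on each of the last $d$ coordinates either updates normally or sends the value to the absorbing state $*$. Realizability of $P_i$ holds iff some state of the form $(b_1, \dots, b_c, *, \dots, *)$ is reachable from $(a_1, \dots, a_c, f_1, \dots, f_d)$ using the $2k$ generators of $S$. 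Since the graph has polynomially many vertices, this reachability is in $\NL$. For each idempotent variable with $i \leq e$ we additionally demand every pair in $P_i$ be diagonal, an $\AC^0$ check. Because realizations for distinct variables are chosen independently in $S$, the overall procedure is a conjunction of $\NL$ subroutines, and hence in $\NL$.

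The main obstacle I anticipate is the careful formalization of the ``undefined'' case and the product-graph reduction, which is the principal new ingredient compared to the transformation-semigroup treatment in \cite{FJ:CP}: one must show that an $s \in S$ with the prescribed positive action and the prescribed missing domain exists precisely when the product graph admits the required path, and in particular that once a forbidden-domain coordinate reaches $*$ it genuinely corresponds to a step at which the composed partial bijection has lost that input. With that lemma in hand, the combinatorics of the global algorithm---independence across variables, composition across the two traces---follows routinely.
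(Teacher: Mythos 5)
Your skeleton is the same as the paper's: decide the complement, invoke $\NL=\co\NL$, guess a witness point together with the constant-length traces of intermediate points through $u$ and $v$, and reduce the per-variable constraint ``one element of $S$ realizes all tracked transitions simultaneously'' to reachability in a product graph of constant power (the paper inlines this as a single nondeterministic walk over guessed generators rather than calling a separate reachability subroutine, but that is cosmetic). Your handling of inverse occurrences by flipping pairs and of the idempotency hypotheses by forcing diagonal pairs also matches the paper, which instead tracks a second copy $p_j''$ of each target point and demands $p_{j+1}s_i=p_{j+1}$; for injective maps the two conditions coincide. One small omission on your side: you must still say why realizability of the diagonal pairs and forbidden points by \emph{some} $s_i\in S$ yields realizability by an \emph{idempotent} --- the standard fix is to pass to $s_i^{\omega}$, noting $as_i=a\Rightarrow as_i^{\omega}=a$ and $\dom(s_i^{\omega})\subseteq\dom(s_i)$.

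The genuine divergence is your explicit treatment of the ``exactly one side defined'' case, and this is not a pedantic refinement: for partial bijections $h(u)\ne h(v)$ can hold with no point at which both are defined (e.g.\ $s_1\colon 1\mapsto 2$, $s_2\colon 2\mapsto 1$ give $s_1s_2$ and $s_2s_1$ as the partial identities on $\{1\}$ and $\{2\}$ respectively), whereas the paper's Algorithm~1 guesses all of $p_{\ell+1},q_{r+1}$ in $[n]$ and so only ever witnesses disagreement at a point where both sides are defined. So your extra case is needed, and you should be aware you are patching something, not just formalizing it. Within that patch there is one real gap to close: you fold every ``undefined'' flag into a forbidden-\emph{domain} point of $s_i$, but an undefined flag sitting on an occurrence of $x_i^{-1}$ is the constraint $a\notin\dom(s_i^{-1})=\image(s_i)$, and membership of a point in the image of a partial product $a_{c_1}\cdots a_{c_t}$ cannot be updated coordinatewise as generators are appended on the right, so your absorbing-state construction does not directly apply. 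The repair is available because at most one undefined flag ever occurs in an accepting computation (only one of the two sides may die, and it dies once): for the single affected variable, run the walk for $s_i^{-1}$ instead, reversing and flipping all of its positive pairs and turning the forbidden-image point into a forbidden-domain point; this is legitimate since the generating set is closed under inversion. With that lemma and the $s_i^{\omega}$ step supplied, your argument is complete.
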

\begin{proof}
  Let $u \eq x_{i_1}^{f_1} \cdots x_{i_\ell}^{f_\ell}$ and $v \eq x_{j_1}^{g_1} \cdots x_{j_r}^{g_r}$ with $i_1,\dots,i_\ell,j_1,\dots,j_r \in [m]$ and $f_1,\dots,f_\ell,g_1,\dots,g_r \in \{-1,1\}$. We describe an $\NL$ algorithm to test whether an inverse semigroup $S = \langle a_1, \dots, a_k,a_1^{-1},\dots,a_k^{-1} \rangle$ does \emph{not} model
  \begin{align*}
    x_1 \eq x_1^2,\dots,x_e \eq x_e^2 \implies u \eq v.
  \end{align*}
  
  Since $\NL$ is closed under complementation \cite[Thm 8.27]{SIP:ITC}, this implies that the decision problem $\Model(x_1 \eq x_1^2,\dots,x_e \eq x_e^2 \implies u \eq v)$ belongs to $\NL$.
  For each $i \in [m]$, we let $P_1(i) = \{p \in [\ell]:i_p = i\}$ and $P_2(i) = \{p \in [r]:j_p = i\}$.
  The algorithm is depicted in Algorithm~\ref{alg:models}.
  Since $\ell + r$ is a constant, the algorithm only requires logarithmic
  space.

  \begin{algorithm}
  \caption{$\co\NL$ algorithm for $\Model(u \eq v)$}
    \label{alg:models}
    \begin{algorithmic}[1]
      \Input{$a_1,\dotsc,a_k \in I_n$}
      \Output{Does $\langle a_1,\dots,a_k,a_1^{-1},\dots,a_k^{-1} \rangle$ \emph{not} model $u \eq v$?}
      \State guess integers $p_1, \dots, p_{\ell+2}, q_1, \dots, q_{r+2} \in [n+1]$
      \LineIf{$p_1 \ne q_1$ or $p_{\ell+1} = q_{r+1}$}{\Reject}
      \ForAll{$i \in [m]$}
      \LineForAll{$j \in [\ell]$}{$p_j' := p_j$,\, $p_j'' := p_{j+1}$}
      \LineForAll{$j \in [r]$}{$q_j' := q_j$,\, $q_j'' := q_{j+1}$}
        \Repeat
          \State guess $c \in [k]$
          \ForAll{$j \in P_1(i)$}
            \LineIf{$f_j = 1$}{$p_j' := p_j'a_c$,\, $p_j'' := p_j''a_c$}
            \LineIf{$f_j = -1$}{$p_{j+1}' := p_{j+1}'a_c,\, p_{j+1}'' := p_{j+1}'' a_c$}
          \EndFor
          \ForAll{$j \in P_2(i)$}
            \LineIf{$g_j = 1$}{$q_j' := q_j'a_c$,\, $q_j'' := q_j''a_c$}
            \LineIf{$g_j = -1$}{$q_{j+1}' := q_{j+1}'a_c,\, q_{j+1}'' := q_{j+1}''a_c$}
          \EndFor
         \Until{[$\forall j \in P_1(i) \colon p_j' = p_{j+1}$ if $f_j = 1$ and $p_{j+1}' = p_j$ if $f_j = -1]$ and \\ \hspace{1.35cm} $[\forall j \in P_2(i) \colon q_j' = q_{j+1}$ if $g_j = 1$ and $q_{j+1}' = q_j$ if $g_j = -1]$ and \\ \hspace{1.35cm} $[i \in [e] \implies (\forall j \in P_1(i) \colon p_j'' = p_{j+1}$ if $f_j = 1$ and \\ \hspace{1.35cm} $p_{j+1}'' = p_j$ if $f_j = -1)$ and $(\forall j \in P_2(i) \colon q_j'' = q_{j+1}$ if $g_j = 1$\\ \hspace{1.35cm}  and $q_{j+1}'' = q_j$ if $g_j = -1)]$}
      \EndFor
      \State\Accept
    \end{algorithmic}
  \end{algorithm}

  The process corresponds to nondeterministically guessing generators to produce elements from $S$ to substitute for the variables in $X$ such that the left-hand side and the right-hand
  side of the equation map the point $p_1 = q_1 \in [n]$ to distinct points $p_{\ell+1},q_{r+1} \in [n+1]$ (i.e. a point $x$ such that $xu \neq xv$). The extra point $n+1$ represents the image of a generator applied to a point outside of the generator's domain and this extra point is fixed by all generators This is the natural embedding of partial bijections into a transformation semigroup. The points $p_2,\dots,p_\ell,q_2,\dots,q_r \in [n+1]$ represent evaluations after applying the generators comprising $u$ and $v$. We will need two more points, $p_{\ell+1}$ and $q_{r+1}$ for technical details discussed in the following correctness proof.
  
  First, suppose that the input $S = \langle a_1,\dots,a_k\rangle$ does not model $x_1 \eq x_1^2,\dots,x_e \eq x_e^2 \implies u \eq v$. This means that there are elements $s_1,\dots,s_m \in S$ such that $s_{i_1}^{f_1}\cdots s_{i_\ell}^{f_\ell} \neq s_{j_1}^{g_1}\cdots s_{j_r}^{g_r}$ and $s_1,\dots,s_e$ are idempotent. Pick a $p_1 \in [n]$ such that $p_1 s_{i_1}^{f_1} \cdots s_{i_\ell}^{f_\ell} \neq p_1 s_{j_1}^{g_1} \cdots s_{j_r}^{g_r}$. Let $q_1 := p_1$. For each $\alpha \in [\ell]$, let $p_\alpha = p_1 s_{i_1}^{f_1} \cdots s_{i_{\alpha-1}}^{f_{\alpha-1}}$. For each $\alpha \in [r]$, let $q_\alpha = q_1 s_{j_1}^{g_1} \cdots s_{j_{\alpha-1}}^{g_{\alpha-1}}$.

  To verify that the algorithm will accept the input, consider any $s_i \in \{s_1,\dots,s_m\}$. Let $s_i = a_{c_1} \cdots a_{c_d}$ with $c_1, \dots, c_d \in [k]$. Lines 6--17 will successively guess the generators and transform the points $p_j'$ and $p_{j+1}'$ for each $j \in P_1(i)$; likewise for $q_j'$ and $q_{j+1}'$ for each $j \in P_2(i)$. When this loop completes, the algorithm will ensure the following. For each $j \in P_1(i)$, $f_i = 1$ implies $p'_j = p_js_i = p_{j+1}$ and $f_i = -1$ implies $p'_{j+1} = p_{j+1}s_i$. Note that, in the latter case, $p_js_i^{-1} = p_{j+1}$, so $p_j = p_{j+1}s_i$ and thus $p'_{j+1} = p_j$. The algorithm works likewise for the points $q_j'$ and $q_{j+1}'$ for each $j \in P_2(i)$. Finally, Lines 18-20 are satisfied since $s_i$ and $s_i^{-1}$ are idempotent. If $f_j = 1$, $p''_j = p_{j+1}s_i = (p_js_i)s_i = p_js_i = p_{j+1}$. If $f_j = -1$, $p''_j = p_{j+1}s^{-1}_i = (p_js^{-1}_i)s^{-1}_i = p_js^{-1}_i = p_{j+1}$. We use the points $p_{\ell+2}$ and $q_{r+2}$ to define $p''_{\ell+1}$ and $q''_{r+1}$ in the case that $f_\ell=-1$ or $g_r=-1$, respectively.
  
  We now prove that if the algorithm accepts, then $S = \langle a_1,\dots,a_k\rangle$ does not model $x_1 \eq x_1^2,\dots,x_e \eq x_e^2 \implies u \eq v$.
  Let $p_1,\dots,p_{\ell+1},q_1,\dots,q_{r+1}$ be the guessed integers in Line 1. For each $i \in [m]$, let $s_i = a_{c_1}\cdots a_{c_g}$ be the sequence of guessed generators in Line 7.
  Then for each $j \in P_1(i)$: $p_j s_i = p_{j+1}$ if $f_i=1$ and $p_j s_i^{-1} = p_{j+1}$ if $f_i = -1$. Likewise, for each $j \in P_2(i)$, $q_j s_i = q_{j+1}$ if $g_i=1$ and $q_j s_i^{-1} = q_{j+1}$ if $g_i=-1$. Let $s_i^\omega$ be the idempotent power of $s_i$. Then for each $j \in P_1(i)$ with $j \le e$, we have $p_{j+1}s_i^\omega = p_{j+1} s_i = p_{j+1}$ and for each $j \in P_2(i)$ with $j \le e$, we have $q_{j+1}s_i^\omega = q_{j+1} s_i = q_{j+1}$.
  
  By the definitions of $P_1(i)$ and $P_2(i)$, this demonstrates that $p_1 h(u) = p_{\ell+1}$ and $q_1 h(v) = q_{r+1}$ where $h \colon X^* \to S$ is the homomorphism defined by $h(x_i) = s_i^\omega$ for all $i \in [e]$ and $h(x_i) = s_i$ for all $i \in \{e+1,\dots,m\}$. By Line~2 of the algorithm, we obtain $h(u) \ne h(v)$, thereby concluding the proof.
\end{proof}

\section{Idempotent Membership}

\cite[Thm 4.10]{TJ:CI} proves the following problem is $\PSPACE$-complete:

\medskip
{\bf Idempotent Membership for Inverse Semigroups}
\begin{itemize}
\item Input: $a_1,\dots,a_k,b \in I_n$ where $bb = b$.
\item Problem: Is $b$ in the inverse semigroup generated by $a_1,\dots,a_k$?
\end{itemize}

Idempotent Membership for Partial Bijection Semigroups is similarly defined, asking whether $b$ is in the generated partial bijection semigroup. We now provide an independent proof that Idempotent Membership for Partial Bijection Semigroups is a $\PSPACE$-complete problem by reducing from the $\PSPACE$-complete Rectangle Tiling Problem. For this problem, we are given square tiles $T := \{T_1,\dots,T_k\}$, a set of colors $C := \{1,\dots,c\}$, and a positive integer $m$. Each side of each tile is a color from $C$. The problem is whether there is a way to arrange tiles from  $T$ into a grid of fixed width $m$ and some length $n$ such that adjacent edges have the same color and the edges along the border are all colored $1$ \cite{CHL:DTG}.

\medskip
{\bf Rectangle Tiling Problem}
\begin{itemize}
\item Input: $T = \{T_1,\dots,T_k\}$, $C = \{1,\dots,c\}$, and $m \in \mathbb{N}$ given in unary form.
\item Problem: Is there an $n \in \mathbb{N}$ and an $m \times n$ grid that is a proper tiling?
\end{itemize}

Formally, for an $m \times n$ grid of tiles, we can refer to the tile in the $i^{th}$ row and $j^{th}$ column as $T_{i,j}$ and think of it as a map $T_{i,j}: \{1,2,3,4\} \to C$ representing colors on its (1) north, (2) right, (3) south, and (4) left edges. The row values increase from north to south and the column values increase from left to right. We define a grid of tiles to be a {\bf proper tiling} if every outside edge has color 1 and adjacent edges have matching colors. In other words, a tiling is proper if the following conditions are true for every $i,j \geq 1$:
\begin{align*}
T_{1,j}(1) &= T_{i,n}(2) = T_{m,j}(3) = T_{i,1}(4) = 1,\\
T_{i,j}(2) &= T_{i,j+1}(4),\text{ and } T_{i,j}(3) = T_{i+1,j}(1).
\end{align*}
Placing tiles north-to-south and columns left-to-right, each tile can be thought of as a partial bijection that: (1) maps its north color to its south color, (2) maps its left color to its right color, and (3) preserves the left/right colors of other rows. We formally encode this intuition in the following proof.

\begin{proposition} \label{idmemthm}
  Idempotent Membership for Partial Bijection Semigroups is $\PSPACE$-complete.
\end{proposition}
\begin{proof}
Given tiles $T = \{T_1,\dots,T_k\}$, colors $C = \{1,\dots,c\}$, and a fixed height $m \in \mathbb{N}$, we will describe how each tile can be thought of as partial bijections in such a way that a proper tiling exists iff the generated partial bijection semigroup contains an idempotent that fixes white colors. The points acted on by the partial bijections semigroup will consist of $m$ distinct copies of $C$ for north-south colors and $m$ distinct copies of $C$ for left-right colors: $Q := \{(p,q,r):p \in \{h,v\}, q \in [m],r \in [c]\}$. The point $(v,i,r)$ represents a north-south color in the $i^{th}$ row. The point $(h,i,r)$ represents a left/right color in the $i^{th}$ row.

For each tile $T_j$, we define generators $\{a_{1,j},\dots,a_{m,j}\}$ corresponding to how $T_j$ maps colors when placed into a particular row. For $1 \leq i < m$, $a_{i,j}$ maps the north-south color $(v,i,T_j(1))$ to the north-south color $(v,i+1,T_j(3))$. $a_{m,j}$ maps $(v,m,T_j(1))$ to $(v,1,1)$ iff $T_j(3)=1$. $a_{i,j}$ is undefined on all other north-south colors. The first index of the north-south colors will force generators to be composed in the following form: $a_{1,\_} \cdots a_{m,\_}$ This corresponds to tiles being placed north-to-south in a column with matching north-south edges, ending with a tile that has a southern color of $1$. $a_{i,j}$ maps the left-right color $(h,i,T_j(4))$ to the left-right color $(h,i,T_j(2)$. $a_{i,j}$ is undefined on colors $(h,i,r)$ with $r \neq T_j(4)$ and it fixes left-right colors from every other row, $(h,p,r)$ with $p \neq i$.

We claim that there is a proper tiling iff $\langle a_{v,1,1},\dots,a_{m,k}\rangle$ contains an idempotent with domain $\{(v,1,1),(h,1,1),\dots,(h,m,1)\}$, corresponding to $1$ being the value of the first northern color and the value of the left-most colors of each row. Suppose the idempotent exists. To preserve the point $(v,1,1)$, the first generator must be of the form $a_{1,j}$ where $T_j(1) = 1$, meaning the first tile has a northern color of $1$. We previously noted that, in order to keep $(1,1)$ in the domain of the generated element, the first $m$ generators will force a corresponding column of tiles with matching north-south edges and a final southern color of $1$. Since $(v,m,T_j(1))a_{m,j}=(v,1,1)$ for every $T_j$ with a southern edge colored $1$, then $a_{1,\_} \cdots a_{m,\_}$ fixes $(v,1,1)$ and the southern edge of the $m^{th}$ tile is colored $1$. Inductively, the generated idempotent must be a composition of elements of the form $a_{1,\_} \cdots a_{m,\_}$, which corresponds to a tiling that: (1) has northern-most and southern-most edges colored $1$ and (2) matching adjacent northern and southern edges.

To preserve the color $(h,i,1)$, each of the first $m$ generators must correspond to tiles that have a left color of $1$, since the only point preserved by $a_{i,j}$ in row $i$ is the color $T_j(4)$. Note that each $a_{i,j}$ fixes points in every other row. Then the first $m$ generators, $a_{1,j_1} \cdots a_{m,j_m}$, will map each $(h,i,1)$ to $(h,i,T_{j_i}(2))$, the color on the right edge of the corresponding tile. Inductively, subsequent columns will each have left edges that match the previous column's right edges. The condition that each $(h,i,1)$ be mapped back to itself forces the final column to have right edges colored $1$.

For the converse, suppose a proper tiling exists and denote the tile in the the $(m,n)$ position as $T_{[m,n]}$. Then we claim the idempotent that fixes $\{(v,1,1),(h,1,1),\dots,(h,m,1)\}$ can be obtained by the following composition of generators:

\[\prod \limits_{p = 1} \limits^n \prod \limits_{i=1} \limits^m a_{i,[i,p]}.\]

We need to check that this generated element fixes $\{(v,1,1),(h,1,1),\dots,(h,m,1)\}$ and excludes all other colors from its domain. Because $T_{[1,1]}(1)=1$, then $(v,1,1)a_{1,[1,1]} = (v,2,T_{[1,1]}(3))$ and all other north-south colors will be excluded. Since $T_{[i,1]}(1)=T_{[i-1,j]}(3)$ for $1<i\leq m$, then $(v,1,1)$ will remain in the domain of the composition of the first $m$ generators. Since $T_{[m,1]}(3)=1$, then the first $m$ generators will map the north-south color $(v,1,1)$ back to itself. This argument is valid for each of the $n$ blocks of $m$ generators, so the whole composition will fix $(v,1,1)$.

Each left-right color $(h,i,1)$ can only be excluded by generators $a_{i,[i,p]}$, being fixed by all other generators. Since the first tile in each row has a left edge of $1$, the first $m$ generators will map $(h,i,1)$ to $(h,i,T_{[i,1]}(2))$ and left-right colors $(h,i,r)$ with $r \neq 1$ will be excluded. Inductively, because the left edges of each column match the right edges of the previous column, the point $(h,i,1)$ will remain in the domain of the composition. Because the final right edges are all colored $1$, the point $(h,i,1)$ will be mapped back to itself.
\end{proof}

Variations of the Rectangle Tiling Problem yield complete problems for other complexity classes. The Square Tiling Problem, in which a given value defines both the width and height of the grid, is known to be $\NP$-complete \cite{CHL:DTG}. The Domino Problem, which asks whether a given finite set of tiles can tile the quarter plane, is known to be undecidable \cite{BE:UD}. Consequently, understanding the connection between tiling problems and partial bijection semigroup problems yields a powerful tool for analyzing the complexity of partial bijection semigroup problems. For example, the following problem is certainly in $\NP$.

\medskip
{\bf Restricted Idempotent Membership for Inverse Semigroups}
\begin{itemize}
\item Input: $m \in \mathbb{N}$, $a_1,\dots,a_k,b \in I_n$ where $bb = b$.
\item Problem: Is $b = c_1 \cdots c_{m^2}$ for some $c_1,\dots,c_{m^2} \in \{a_1,\dots,a_k\}$?
\end{itemize}

That it is $\NP$-Complete follows by reducing from the Square Tiling Problem using a similar construction as described in the proof of Prop~\ref{idmemthm}.

\section{Acknowledgements}
The author would like to thank Alan Cain, Ant\'onio Malheiro, and Peter Mayr for their valuable comments and contributions.


\end{document}